\numberwithin{equation}{section}
\newtheorem{thm}{Theorem}[section]
\newtheorem{pro}[thm]{Proposition}
\newtheorem{defn}[thm]{Definition}
\newtheorem{examp}[thm]{Example}
\newcommand{\qed}{\hfill \mbox{\raggedright \rule{.07in}{.1in}}}
\newenvironment{proof}{\vspace{1ex}\noindent{\bf
Proof}\hspace{0.5em}}{\hfill\qed\vspace{1ex}}
\newcommand{\Z}{\mathbb{Z}}
\title{Deterministically Driven Random Walks on a Finite State Space.}
\author{Colin Little\thanks{Department of Mathematics, University of Surrey, Guildford, GU2 7XH, UK.}}
\begin{document}

\maketitle

\begin{abstract}
We introduce the concept of a \emph{deterministic walk}. Confining our attention to the finite state case, we establish hypotheses that ensure that the deterministic walk is \emph{transitive}, and show that this property is in some sense robust. We also establish conditions that ensure the existence of asymptotic occupation times. 
\end{abstract}

\section{Introduction}
Our purpose is to provide a foundational background for the study of deterministically driven random walks on a state space $S$. In particular, we are concerned to generalise properties of Markov chains to settings in which the dynamics governing the behaviour of the walk is deterministic, and where the Markov property does \textbf{not} necessarily hold. 

We define the deterministic walk as follows.

\begin{defn}\label{def:DetWalk} 
Let $T$ be a measurable transformation of a probability space $(X, m)$, and suppose that associated to each $i \in S$ is a measurable function $f_i: X \rightarrow S$. We call each $f_i$ a \emph{transition function}, and we call the collection $(f_k)_{k \in S}$ an \emph{environment} on $S$. 

Define the skew-product transformation $T_f: X \times S \rightarrow X \times S$ by
\begin{equation}\label{eq:skewprod}
T_f(x, i) := (Tx, f_i(x)).
\end{equation}
Define the \emph{deterministic walk on} $S$ \emph{in the environment} $(f_k)_{k \in S}$ to be
\begin{equation}\label{eq:detwalk}
U_n := U_{i,n}(x) := \pi_2(T_f^n(x, i))
\end{equation} 
where $\pi_2(x,y) := y$. 
\end{defn}

An important feature of the above is that random updates are highly dependent on the current position of the walk. While there is a considerable wealth of dynamical systems literature devoted to the study of systems in which transformations are applied randomly to a common state space, these predominantly deal with the situation where random updates are state-independent. A classic example of such systems is that of a \emph{random transformation} $R$ of a probability space $(Y, \nu)$. Given a finite set $\mathcal{T} := \{\tau_i: Y \rightarrow Y: i = 1,\ldots, K\}$ of measurable transformations of a probability space $(Y, \nu)$, and an i.i.d.\ sequence $(T_n)_{n \geq 1}$ of random variables taking values in $\mathcal{T}$, the random transformation $R$ of $(Y, \nu)$ is defined such that for all $n \geq 1$, $R^n := T_n \circ \ldots \circ T_1$. An equivalent, deterministic, representation of $R$ uses the skew-product $T_f: \Sigma^+ \times X \rightarrow \Sigma^+ \times X$ such that $T_f(\omega, x) := (\sigma(\omega), T_{\omega_0}(x))$, where $\Sigma := \{1, \ldots, K\}$, and $\sigma: \Sigma^+ \rightarrow \Sigma^+$ is the full-shift. In particular, for all $n \geq 1$, 
\begin{equation}\label{eq:PosIndepRmap}
R^n(x) = \pi_2(T_f^n(\omega, x)) = \tau_{\omega_{n-1}} \circ \ldots \circ \tau_{\omega_0}(x).
\end{equation}
The ergodic theory of such systems was studied extensively in \cite{Kifer1986}. Among the many areas in which the model given by (\ref{eq:PosIndepRmap}) (and generalisations thereof) play an important role are the theory of products of random matrices (which goes back to the work of Furstenberg and Kesten \cite{FurstKest1960}) and the theory of random iterated function systems (in which the properties of fractals generated by random applications of contractive maps to a complete metric space are studied - see  \cite{Barnsley1985, Hutchinson1981} for early developments in this field). For a relatively recent study of position dependent case see \cite{BBQ2008}.

While systems described by equation (\ref{eq:PosIndepRmap}) enjoy the Markov property, we are interested in understanding the asymptotic behaviour of deterministic walks that are non-Markov. 

Focusing on the finite state case, we establish recurrence properties under relatively mild assumptions. Notably, no mixing assumptions are required on the underlying dynamics. We also note that while distortion properties play an important role in the infinite state case (see \cite{Little2012b}), such assumptions are not required for establishing recurrence in the finite state case.

In Section \ref{sec:FiniteTransitivity}, we formally introduce the central notions of \emph{recurrence}, \emph{transience} and \emph{transitivity} of a deterministic walk, and motivate these ideas by considering simple examples. We then establish (in Theorem \ref{thm:GibbsMarkov_Type_Recurrence}) conditions for which a deterministic walk on a finite state space is transitive, and show that this property is in some sense robust.

In Section \ref{sec:OccupationTimes}, we establish (in Theorem \ref{thm:AsympOccTimes}) conditions under which a deterministic walk on a finite state space admits asymptotic occupation times.

\section{Transitivity of a Deterministic Walk on a Finite State Space}\label{sec:FiniteTransitivity}

The following properties of a deterministic walk are fundamental to the exposition of our main results.

\begin{defn}\label{def:RecTrans}
Given a deterministic walk on a state space $S$ in an environment $(f_k)_{k \in S}$, we say that a state $i \in S$ is \emph{recurrent} if 
\[
m(\{x \in X: U_{i,n}(x) = i, \mathrm{for~some}~n \geq 1\}) = 1. 
\]
We say that a state $i$ is \emph{transient} if it is not recurrent.

We say that the deterministic walk is \emph{transitive on} $S$ if for all $i, j \in S$, 
\[
m(\{x \in X: U_{i,n}(x) = j, \mathrm{for~some}~n \geq 1\}) = 1. 
\]
\end{defn}

\begin{pro}\label{pro:2StateTrans}
If $S$ consists of two states, $m(\{x \in X: f_i(x) = i\}) < 1$ for each $i \in S$, and $T$ is ergodic and measure-preserving, then the deterministic walk is transitive on $S$.
\end{pro}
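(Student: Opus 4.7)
The plan is to exploit the fact that with only two states, leaving one state is the same as reaching the other, so transitivity reduces to showing that the walk almost surely exits each state. Write $S = \{i, j\}$ and, for each state $k$, set $B_k := \{x \in X : f_k(x) = k\}$; the hypothesis says $m(B_k^c) > 0$. Unwinding the skew-product $T_f(x, k) = (Tx, f_k(x))$ shows that the walk starting at $k$ remains at $k$ through time $n$ iff $T^\ell x \in B_k$ for all $0 \leq \ell \leq n-1$. Hence the set of $x$ for which the walk started at $k$ never leaves $k$ equals $\bigcap_{\ell \geq 0} T^{-\ell} B_k$.

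Next, I would invoke ergodicity. Since $T$ is ergodic and measure-preserving with $m(B_k^c) > 0$, the Birkhoff ergodic theorem (or equivalently Poincaré recurrence applied to $B_k^c$) implies that $m$-a.e. $x$ visits $B_k^c$ under forward iteration of $T$. Therefore $m(\bigcap_{\ell \geq 0} T^{-\ell} B_k) = 0$, so the walk started at $k$ almost surely leaves $k$ after finitely many steps. Applying this with $k = i$ and recalling that $|S| = 2$ already yields $m(\{x : U_{i,n}(x) = j \text{ for some } n \geq 1\}) = 1$, and symmetrically with $i$ and $j$ swapped.

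To handle the diagonal case of transitivity (i.e.\ recurrence of $i$), I would iterate via a stopping-time argument. Define $\tau(x) := \min\{n \geq 1 : U_{i,n}(x) = j\}$, which is a.s.\ finite by the previous step. On the event $\{\tau = n\}$, the walk occupies state $j$ at time $n$ with underlying point $T^n x$, and the event that it never returns to $i$ afterwards is contained in $\{x : T^n x \in \bigcap_{\ell \geq 0} T^{-\ell} B_j\}$. Because $T$ preserves $m$ and the intersection has measure zero by the previous paragraph applied to $j$, this event has measure zero for each fixed $n$; summing the countable union over $n \geq 1$ yields recurrence of $i$. Combined with the off-diagonal step and its symmetric counterpart, this gives transitivity on $S$.

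I do not expect a serious obstacle: the delicate point is merely to decompose the "return to $i$" event along the values of the first-passage time $\tau$ so that $T$-invariance can be used to push the measure-zero set $\bigcap_{\ell \geq 0} T^{-\ell} B_j$ forward, and to keep track that no mixing or independence is invoked beyond ergodicity of $T$.
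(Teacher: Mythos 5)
Your proposal is correct and follows essentially the same route as the paper: both rest on the observation that ergodicity and measure preservation force $m\bigl(\bigcap_{n\geq 0}T^{-n}\{x: f_k(x)=k\}\bigr)=0$, so the walk almost surely leaves whichever state it occupies. The only difference is that you spell out, via the first-passage-time decomposition and $T$-invariance, the diagonal (return to $i$) case that the paper's one-line proof leaves implicit in its final ``hence''.
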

\begin{proof}
Since $m(\{x \in X: f_i(x) = i\})< 1$ for each $i \in S$, and $T$ is ergodic and measure preserving, it follows from Birkhoff's Ergodic Theorem that $m(\cap_{n=0}^{\infty}T^{-n}\{x \in X: f_i(x) = i\}) = 0$. Hence for $i,j \in S$, $m(\{x: U_{i,n}(x) = j ~\mathrm{for~some}~n \geq 1\}) = 1$.
\end{proof}

\begin{examp}
In light of Proposition \ref{pro:2StateTrans} it might be hoped that if $S$ is finite, $T$ is ergodic and measure preserving, and $m(\{x \in X: f_i(x) = j\}) > 0$ for all $i, j \in S$, then $(U_n)_{n \geq 0}$ is necessarily transitive. As the following counter-example shows, this is not the case: Consider the deterministic walk on the state space $S = \{0, 1, 2\}$ driven by the transformation $T: [0,1] \rightarrow [0,1]$, where $Tx := 2x ~\mathrm{mod}~1$, and with transition functions $f_0$, $f_1$ and $f_2$ given by 
\begin{eqnarray*}
f_0(x) = \left\{ \begin{array}{llllllll} 
 0, x \in [\frac{3}{4},1) &\\
 1, x \in [\frac{1}{4}, \frac{3}{4}) &\\
 2, x \in [0 , \frac{1}{4})
\end{array} \right.
\end{eqnarray*}
\begin{eqnarray*}
f_1(x) = \left\{ \begin{array}{llllllll} 
 0, x \in [\frac{1}{4}, \frac{1}{2}) &\\
 1, x \in [0, \frac{1}{4}) \cup [\frac{3}{4}, 1] &\\
 2, x \in [\frac{1}{2}, \frac{3}{4}) &  
\end{array} \right.
\end{eqnarray*}
and
\begin{eqnarray*}
f_2(x) = \left\{ \begin{array}{llllllll} 
 0, x \in [0, \frac{1}{4}) \cup [\frac{1}{2}, \frac{3}{4}) &\\
 2, x \in [\frac{1}{4}, \frac{1}{2}) \cup [\frac{3}{4}, 1]. &  
\end{array} \right.
\end{eqnarray*}
Defining $\beta := \{[0, \frac{1}{2}), [\frac{1}{2}, 1)\}$, by inspection we see that the skew-product $T_f: [0,1] \times S \rightarrow [0,1] \times S$ decomposes the product state space $\beta \times S$ into two closed communication classes given by
$\{[0, \textstyle\frac{1}{2}) \times \{1\}~,~[\frac{1}{2}, 1] \times \{0\}\}$ 
and $\{[0, \textstyle\frac{1}{2}) \times \{0\}~,~[\textstyle\frac{1}{2}, 1] \times \{1\}~,~[0, \frac{1}{2}) \times \{2\}~,~[\textstyle\frac{1}{2}, 1] \times \{2\}\}$.

Since the first communication class does not project onto the whole fibre $S$, (in particular, the state 2 is not represented in this class) it follows that the deterministic walk is not transitive on $S$. In fact, it is easy to show that $T_f$ induces a Markov chain on the state space $\beta \times S$ (with respect to the product measure $\mathrm{Lebesgue} \times \mathrm{counting~measure}$), and it follows that any walk started in the second communication class necessarily visits all three states infinitely often.
\end{examp}

\begin{defn} Recall that a non-singular transformation $T$ of a measure space $(X, m)$ is said to be \emph{Markov}, with a measurable \emph{Markov partition} $\beta$ of $X$, if
\begin{itemize}
\item[(i)] for all $a \in \beta$, $T(a)$ is the union of elements of $\beta$,
\item[(ii)] $T_{|a}: a \rightarrow T(a)$ is a bijection.
\end{itemize}
Given a Markov map $T$ with with Markov partition $\beta$, for all $n \geq 1$ and all sequences $a_0, \ldots, a_{n-1} \in \beta$, we call the set $[a_0, \ldots, a_{n-1}] := \cap_{j=0}^{n-1}T^{-j}a_j$ a \emph{cylinder set of rank} $n$ or an $n$-\emph{cylinder}. 

For a given cylinder set $a$, we let $|a|$ denote its rank. 

We denote by $\beta_n$ the collection of all $n$-cylinders. 

We say that a cylinder set $a$ is \emph{admissible} if $m(a) > 0$.
\end{defn}

We introduce concepts for Markov maps that are analogous to related concepts for Markov chains. 

\begin{defn}\label{def:MarkovCommunication}
Let $T$ be a Markov transformation of a measure space $(X, m)$ with Markov partition $\beta$. Given $a, b \in \beta$ we say that $a$ \emph{communicates with} $b$ if
\[
m(\{x \in a: T^nx \in b~\mathrm{for~some}~n \geq 1\}) > 0.
\]
Given $a, b \in \beta$, we say that $a$ and $b$ \emph{intercommunicate} if $a$ communicates with $b$ and $b$ communicates with $a$.

Given $a \in \beta$ we define the \emph{communication class} of $a$ to be the set of all $b \in \beta$ with which it intercommunicates. (We observe that intercommunication is an equivalence relation.)

We say that a communication class $\mathcal{C}$ is \emph{closed} if for all $a \in \mathcal{C}$ and all $b \in \beta$, $a$ communicates with $b$, only if $b$ communicates with $a$. (When $\beta$ is finite, the existence of closed communication classes is automatic.)

We say that $\beta$ is \emph{irreducible} if $\beta$ is itself a communication class under $T$.
\end{defn}

\paragraph{Standing Hypotheses for the Deterministic Walk}
We assume that a deterministic walk on a finite state space $S$ is driven by a Markov transformation $T$ of a probability space $(X, m)$, with Markov partition $\beta$. In proving the main result of this section (Theorem \ref{thm:GibbsMarkov_Type_Recurrence} below) it is convenient to assume that transition functions are constant on elements of $\beta$, but as we observe later, this assumption is not entirely necessary.

\begin{defn}
Define the probability measure $\mu := \frac{m}{\#S} \times \mathrm{counting~measure}$.
\end{defn}
The following proposition is an easy consequence of the standing hypotheses. (The routine details are written out in \cite[Proposition 4.1]{Little2012a}.) 

Define $\tilde{\beta} := \beta \times S$.
\begin{pro}\label{pro:SkewMarkov}
The skew-product $T_f: X \times S \rightarrow X \times S$ is a Markov transformation of the probability space $(X \times S, \mu)$ with Markov partition $\tilde{\beta}$.
\end{pro}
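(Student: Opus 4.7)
The plan is to verify the three ingredients defining a Markov transformation directly from the definitions, exploiting crucially the standing hypothesis that each transition function $f_i$ is constant on every $a \in \beta$.

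First I would address non-singularity of $T_f$ with respect to $\mu$. Since $\mu$ is (up to the factor $1/\#S$) the product of $m$ with counting measure, a set $E \subseteq X \times S$ has $\mu$-measure zero if and only if each fibre slice $E_i := \{x : (x,i) \in E\}$ has $m(E_i)=0$. Writing $T_f^{-1}E$ as the union over $j \in S$ of $\{x : (Tx, f_j(x)) \in E\} \times \{j\}$ and splitting each such slice further over $a \in \beta$ (on which $f_j$ takes a constant value $j' \in S$), the slice becomes $\{x \in a : Tx \in E_{j'}\} \subseteq T^{-1}E_{j'}$, which has $m$-measure zero by non-singularity of $T$. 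Summing over the finitely many $a$ and $j$ gives $\mu(T_f^{-1}E)=0$.

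Next, for condition (i) of the Markov definition, I would fix $\tilde a = a \times \{i\} \in \tilde\beta$. Because $f_i$ is constant on $a$, say $f_i \equiv j_{a,i}$ on $a$, we have
\[
T_f(\tilde a) = T(a) \times \{j_{a,i}\}.
\]
Since $T$ is Markov with partition $\beta$, $T(a) = \bigcup_{b \in \beta, b \subseteq T(a)} b$, hence $T_f(\tilde a) = \bigcup_b \bigl(b \times \{j_{a,i}\}\bigr)$ is a union of elements of $\tilde\beta$, as required.

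For condition (ii), the restriction $T_f|_{\tilde a}$ has the form $(x,i) \mapsto (Tx, j_{a,i})$ with the second coordinate constant, so it is a bijection onto $T(a) \times \{j_{a,i}\}$ precisely because $T|_a : a \to T(a)$ is a bijection. There is no real obstacle here; the only point that needs care is the bookkeeping in the non-singularity argument, and the proof ultimately leans entirely on the fact that constancy of $f_i$ on elements of $\beta$ prevents the fibre coordinate from destroying either the partition-preserving property or the injectivity inherited from $T$.
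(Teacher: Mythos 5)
Your verification is correct: constancy of the transition functions on elements of $\beta$ gives $T_f(a \times \{i\}) = T(a) \times \{f_i(a)\}$, from which conditions (i) and (ii) and the non-singularity of $T_f$ with respect to $\mu$ all follow, and this is precisely the routine argument the paper omits by deferring to \cite{Little2012a}. The only cosmetic point is that the standing hypotheses do not require $\beta$ to be finite, but your non-singularity bookkeeping survives unchanged because a countable union of $m$-null sets is $m$-null (or, more simply, note $\{x : (Tx, f_j(x)) \in E\} \subseteq T^{-1}\bigl(\bigcup_{j'} E_{j'}\bigr)$, so no splitting over $\beta$ is needed there at all).
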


\begin{thm}\label{thm:GibbsMarkov_Type_Recurrence}
Suppose that the base map $T$ is ergodic and invariant with finite Markov partition $\beta$. Then the deterministic walk is transitive on $S$ if and only if every closed communication class in $\tilde{\beta}$ projects onto the whole of $S$.
\end{thm}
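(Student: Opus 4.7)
The proof is of iff form. The forward direction ($\Rightarrow$) I would handle first, by contrapositive: suppose some closed communication class $\mathcal{C} \subseteq \tilde{\beta}$ misses some state $j_0 \in S$, i.e.\ $j_0 \notin \pi_2(\mathcal{C})$. Fix any $(a, i) \in \mathcal{C}$, noting that $m(a) > 0$. Since $\mathcal{C}$ is $T_f$-forward-closed, for every $x \in a$ the orbit of $(x, i)$ remains in $\bigcup_{(a', i') \in \mathcal{C}} a' \times \{i'\}$, so $U_{i, n}(x) \in \pi_2(\mathcal{C})$ is never $j_0$, contradicting transitivity for the pair $(i, j_0)$.

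For the reverse direction ($\Leftarrow$), fix $i, j \in S$; the goal is to prove that $U_{i, n}(x) = j$ for some $n \geq 1$ for $m$-a.e.\ $x$. The plan is to show that, for $m$-a.e.\ $x$, the $T_f$-orbit of $(x, i)$ eventually enters some closed communication class $\mathcal{C}$ of $\tilde{\beta}$ and subsequently visits every cylinder of $\mathcal{C}$ infinitely often; since $\pi_2(\mathcal{C}) = S$ by hypothesis, state $j$ is then visited at (indeed infinitely many) positive times. I would proceed in three steps. First, observe that $\pi_1(\mathcal{C}) = \beta$ for every closed class $\mathcal{C}$: ergodicity of $T$ makes $\beta$ a $T$-irreducible Markov partition, and $\pi_1(\mathcal{C})$ is forward-closed in this irreducible graph (lift any admissible $T$-path out of $\pi_1(\mathcal{C})$ to a $T_f$-path, which must stay in $\mathcal{C}$). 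Second, show that $m$-a.e.\ orbit reaches some closed class in finite time, exploiting the fact that from every transient cylinder there is an admissible $T_f$-path of bounded length to some closed class, together with Birkhoff applied to $T$. Third, show that once inside a closed class $\mathcal{C}$, $m$-a.e.\ orbit visits every cylinder of $\mathcal{C}$ infinitely often, using irreducibility of $\mathcal{C}$ and Birkhoff-driven positive-frequency visits of the $T$-orbit to the corresponding admissible $T$-cylinders.

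The main obstacle is that $\mu$ is not in general $T_f$-invariant, so one cannot simply quote classical finite-state Markov chain theorems. However, since the $f_k$ are constant on elements of $\beta$, the coordinate $i_n = \pi_2(T_f^n(x, i))$ is a deterministic function of $i$ and of the $T$-itinerary $(\pi_1(T_f^\ell (x,i)))_{\ell < n}$; consequently, visits of the $T_f$-orbit to a cylinder $[C_0, \ldots, C_{k-1}]$ of $\tilde{\beta}$ correspond precisely to visits of the $T$-orbit to the underlying $T$-cylinder $[\pi_1(C_0), \ldots, \pi_1(C_{k-1})]$ at times $n$ for which additionally $i_n = \pi_2(C_0)$. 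Birkhoff's ergodic theorem applied to $T$ supplies positive-frequency visits to every admissible $T$-cylinder, and this, combined with the finiteness of $\tilde{\beta}$, is what one needs in steps two and three to bootstrap from the trivial match $i_0 = i$ to infinitely many matches and hence to exhaustion of the eventual closed class---all without invoking any distortion or mixing hypothesis on $T$.
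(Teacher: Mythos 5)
Your ``only if'' argument and your Step 1 (every closed class projects onto all of $\beta$ in the first coordinate, by lifting admissible $T$-transitions) are correct and match the paper, as does your overall decomposition: reach a closed class, then exhaust it. (Minor point: in the ``only if'' direction the orbit of $(x,i)$ stays in the class only for $m$-a.e.\ $x \in a$, not every $x$, but that suffices.) The genuine gap is in Steps 2 and 3, which are the heart of the theorem: you correctly identify the obstacle --- Birkhoff applied to $T$ gives visits of the base orbit to any admissible $T$-cylinder, but gives no control over the fibre coordinate at those times, and there is no Markov property and no $T_f$-invariant measure available here --- and then you do not resolve it. Nothing in your Step 3 excludes the a priori possibility that at every visit of the base orbit to $b$ the walk occupies some in-class state $i_s \neq k$; the phrase ``bootstrap from the trivial match $i_0 = i$ to infinitely many matches, using finiteness of $\tilde{\beta}$'' is the statement to be proved, not an argument. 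The same objection hits Step 2: an admissible $T_f$-path of bounded length from a transient element to a closed class only yields a positive-measure escape route, and without the Markov property one cannot iterate ``each visit is a fresh chance to escape.''

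The paper closes exactly this gap with a construction you would need some version of. Since the transition functions are constant on $\beta$, every admissible cylinder $E$ and initial state $r_0$ determine a terminal state $t(E, r_0)$. One enumerates the finitely many states $j_1, \ldots, j_l$ the walk can occupy over the cylinder $a$ while in the class, and builds a nested sequence of admissible cylinders $E_1 \supseteq E_2 \supseteq \cdots \supseteq E_l$, all of the form $[a, \ldots, b]$, by concatenating at stage $r+1$ a routing word $w(t(E_r, j_{r+1}))$ that carries the state $t(E_r, j_{r+1})$ onward to the target $k$. The payoff is that a \emph{single} Birkhoff visit of the base orbit to the one positive-measure set $E_l$ forces a subsequent visit to $b \times \{k\}$ \emph{no matter which} of the finitely many admissible fibre states the walk is in at that moment; this simultaneous handling of all fibre states replaces the unavailable independent-trials argument from Markov chain theory. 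An analogous nested construction (routing every in-class state over $a$ into states from which no return is possible) yields the escape from non-closed classes, i.e.\ your Step 2. Without this device, or an equivalent one, Steps 2 and 3 remain assertions rather than proofs.
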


\begin{proof}
By Proposition \ref{pro:SkewMarkov}, $T_f: X \times S \rightarrow X \times S$ is a Markov map with Markov partition $\tilde{\beta}$. We observe that the 'only if' direction of the proof is trivial, in that if there exists a closed communication class that does not project onto the whole of $S$ then, clearly, the deterministic walk cannot be transitive. In proving the 'if' part of the result we in fact prove something stronger.
\begin{itemize}
\item[(i)] $T_f$ is transitive on closed communication classes in $\tilde{\beta}$.
\item[(ii)] If a partition element $a \times \{i\} \in \tilde{\beta}$ does not lie in a closed communication class then $\mu\{(y, j) \in X \times S: T_f^n(y, j) \in a \times \{i\} ~\mathrm{i.o.}\} = 0$.
\end{itemize}

Since $S$ and $\beta$ are finite, so is $\tilde{\beta}$, and it follows from (i) and (ii) that with probability 1, an orbit under the skew-product dynamics must eventually end up in a closed communication class $\mathcal{C} \subset \tilde{\beta}$ on which the orbit is transitive thereafter. Hence, the deterministic walk is transitive on $S$ provided that every closed communication class projects onto the whole of $S$.\\
~\\
Proof of (i): Since $T$ is Markov with Markov partition $\beta$, and the environment of transition functions $(f_i)_{i \in S}$ are constant on elements of $\beta$, it follows that for all $n \geq 1$, every admissible $n$-cylinder $x := [x_0, \ldots, x_{n-1}]$, and all $r_0 \in S$, there is a unique itinerary on $S$
\[
r_0, r_1, \ldots , r_{n-1}
\]
such that $f_{r_i}(x_i) = r_{i+1}$ for $i = 0, \ldots, n-2$. We thus define
\[
t(x, r_0) := r_{n-1}.
\]
Fix $a \times \{j\}, b \times \{k\} \in \tilde{\beta}$ such that both lie in the same closed communication class $\mathcal{C}$. Let 
\begin{equation}\label{A}
a \times \{j_1\}, \ldots, a \times \{j_l\} \quad \& \quad b \times \{i_1\}, \ldots, b \times \{i_n\}
\end{equation}
be enumerations of all elements in $\mathcal{C}$ whose $\beta$-coordinates are $a$ and $b$ respectively. Since $T$ is ergodic and measure preserving, such collections are non-empty. We show that if we start in the communication class $\mathcal{C}$ then we must eventually visit $b \times \{k\}$.

Since $b \times \{i_1\}, \ldots, b \times \{i_n\}$ all lie in the closed communication class $\mathcal{C}$, then for each $s = 1, \ldots, n$ there exists a cylinder $w(i_s)$ of the form $[x_1, x_2, \ldots, b]$ such that
\begin{equation}\label{eq:Ws}
t([b, w(i_s)], i_s) := k,
\end{equation} 
and $[b, w(i_s)]$ is admissible (where in an abuse of notation we have written $[b, w(i_s)]$ as shorthand for $[b, x_1, x_2, \ldots, b]$). Equation (\ref{eq:Ws}) says that if the skew-product visits the set $[b, w(i_s)] \times \{i_s\}$, then it must eventually visit the partition element $b \times \{k\}$. 

Since $a \times \{j_1\}, b \times \{k\} \in \mathcal{C}$, there exists an admissible cylinder $E_1$ of the form $[a, \ldots, b]$ such that 
\[
t(E_1, j_1) = k.
\]
For $r = 1, \ldots, l-1$ we may inductively define admissible cylinders
\begin{equation}\label{eq:Er+1}
E_{r+1} = [E_r, w(t(E_r, j_{r+1}))].
\end{equation}
For each $r = 1, \ldots, l$ the cylinder $E_r$ is of the form $[a, \ldots, b]$. Thus, $t(E_r, j_{r+1})$ is the $S$-coordinate at time $|E_r|-1$ of every point in $E_r \times \{j_{r+1}\}$. Hence, every point in $E_r \times \{j_{r+1}\}$ visits the partition element $b \times \{t(E_r, j_{r+1})\}$ at time $|E_r| - 1$. It follows by construction that every point in the set $E_{r+1} \times \{j_{r+1}\}$ as defined in (\ref{eq:Er+1}) will visit the partition element $b \times \{k\}$ at time $|E_{r+1}|-1$. 

Since $m(E_l) > 0$ it follows from Birkhoff's Ergodic Theorem that the base dynamics must eventually visit $E_l$. Since we are in the closed communication class $\mathcal{C}$, and since $E_l \subset a$, it follows that when the base dynamics visit the set $E_l$, the deterministic walk visits one of the states $j_1, \ldots, j_l$. But since $E_l \subset E_{l-1} \subset \ldots \subset E_1$ it follows that we must eventually visit the partition element $b \times \{k\}$.  This completes the proof of (i).\\
~\\
Proof of (ii): Fix $a \times \{i\}$ lying in a communication class $\mathcal{C}$ that is not closed, and let $a \times i_1, \ldots, a \times i_l$ be all the other partition elements in $\mathcal{C}$ whose $\beta$-coordinate is $a$. Fix $b \times \{k\} \in \tilde{\beta}$ with which $a \times \{i\}$ communicates, but which does not communicate with $a \times \{i\}$. Let $A$ denote the set of partition elements that intercommunicate with $a \times \{i\}$ whose $\beta$-coordinate is $b$. Also, let $A'$ denote set of partition elements whose $\beta$-coordinate is $b$, with which $a \times \{i\}$ communicates, but that do not communicate with $a \times \{i\}$. Clearly, $b \times \{k\} \in A'$. 

By a similar argument to that given in the proof of part (i), for all $b \times \{j\} \in A$ there exists a cylinder $w(j) = [x_1, x_2, \ldots, b]$ such that $b \times \{t([b, w(j)], j)\} \in A'$.

Since $a \times \{i\}$ communicates with $b \times \{k\}$ there exists an admissible cylinder $E_0$ of the form $[a, \ldots, b]$ such that 
\[
t(E_0, i) = k.
\]
For $r = 0, \ldots, l-1$ we may inductively define admissible cylinders
\begin{eqnarray}\label{def:Edisp}
E_{r+1} := \left\{ \begin{array}{lllllllllll} 
E_r, ~\mathrm{if}~ b \times \{t(E_r, i_{r+1})\} \in A' &\\
{}[E_r, w(t(E_r, i_{r+1})) ], \mathrm{if}~b \times \{t(E_r, i_{r+1})\} \in A. &
\end{array} \right.
\end{eqnarray}
Let $a \times \{s_1\}, \ldots, a \times \{s_t\}$ be the set of all partition elements in $\tilde{\beta}$ whose $\beta$-coordinate is $a$, and with which $a \times \{i\}$ communicates, but which do not communicate with $a \times \{i\}$. 

Assuming that the skew-product dynamics start in $a \times \{i\}$ then by Birkhoff's Ergodic Theorem, the base dynamics must (with probability 1) eventually make a first visit to the set $E_l \subset a$, at which point, the deterministic walk must be in one of the states $i, i_1, \ldots, i_l, s_1, \ldots, s_t$. If it is one of the $s_i$ (for $i = 1, \ldots, t$), then we cannot return to $a \times \{i\}$ by assumption. If instead it is in one of the states $i, i_0, \ldots, i_l$ then, since $E_l \subseteq E_{l-1} \subseteq \ldots \subseteq E_0$, by the same argument as in part (i), it must either eventually visit the state $b \times \{k\}$ or some other element of the set $A'$. By definition, it cannot return from any of these partition elements to the partition element $a \times \{i\}$. This completes the proof of (ii).
\end{proof}

\paragraph{Robustness of transitivity} 
We call a set $A \subset X$ \emph{transitive} if for all $i \in S$ and all $x \in A$, the orbit of the associated deterministic walk $(U_{i,n}(x))_{n=0}^{\infty}$ visits every state in $S$ at least once. Under the hypotheses of Theorem \ref{thm:GibbsMarkov_Type_Recurrence}, it is immediate that if the deterministic walk is transitive on the state space $S$, then there must exist a transitive cylinder set $a$ of positive measure. Any perturbation $(f'_i)_{i \in S}$ of the environment $(f_i)_{i \in S}$ of transition functions that preserves the existence of a transitive positive measure subset $a' \subset a$ will, by Birkhoff's Ergodic Theorem, preserve the transitivity of the deterministic walk. Therefore, we may say that the transitivity of the deterministic walk on a finite fibre is \emph{robust} to perturbations of the environment. It follows that the hypothesis that transition functions are constant on elements of the Markov partition is not a necessary condition for the deterministic walk to be transitive on a finite state space.\\

\section{Asymptotic Occupation Times for a Deterministic Walk on a Finite State Space}\label{sec:OccupationTimes}

In this section, we continue to assume that the standing hypotheses hold and we establish conditions for the existence of asymptotic occupation times of a deterministic walk on a finite state space. 

\begin{defn}\label{def:StrongDistortion}

Let $T: X \rightarrow X$ be a Markov transformation of a probability space $(X, m)$, with Markov partition $\beta$. 
By the non-singularity of $T$, for all $n \geq 1$ and all $a \in \beta_n$, we may define the Radon-Nikodym derivative $v'_a := \frac{d(m \circ T^{-n})}{dm}$ such that for every measurable set $B$, $\int_{T^na \cap B} v'_a dm = m(a \cap T^{-n}B)$.

We say that $T$ has the \emph{Strong Distortion Property}, with distortion constant $D \geq 1$, if for all $n \geq 1$ and for all $a \in \beta_n$, and for a.e.\ $x, y \in T^n a$, $\frac{v'_a(x)}{v'_a(y)} \leq D$.
\end{defn}
The following proposition is an easy consequence of the standing hypotheses. (The routine details are written out in \cite[Proposition 4.3]{Little2012a}.)
\begin{pro}\label{pro:T_fisMGF}
Suppose that the map $T$ has the Strong Distortion Property. Then the skew-product $T_f: X \times S \rightarrow X \times S$ has the Strong Distortion Property with respect to the measure $\mu$.
\end{pro}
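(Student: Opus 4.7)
The plan is to exploit the fact that because the transition functions $(f_i)_{i \in S}$ are constant on elements of $\beta$, the $S$-fibre contributes no volume distortion, so the Radon-Nikodym derivative of $\mu \circ T_f^{-n}$ reduces to that of $m \circ T^{-n}$ on the base. The distortion constant for $T_f$ is then exactly the distortion constant $D$ of $T$.

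First I would identify the $n$-cylinders for $T_f$ relative to $\tilde{\beta}$. Since each $f_i$ is constant on every $a \in \beta$, for any $a_0 \in \beta$ and $i_0 \in S$ there is a unique itinerary $i_0, i_1, \ldots, i_{n-1}$ with $i_{j+1} = f_{i_j}|_{a_j}$ determined by the base cylinder $[a_0, \ldots, a_{n-1}]$ under $T$. Consequently every admissible $n$-cylinder of $T_f$ has the form $\tilde{a} = [a_0, \ldots, a_{n-1}] \times \{i_0\}$, and $T_f^n(\tilde{a}) = T^n[a_0, \ldots, a_{n-1}] \times \{i_n^{\ast}\}$ for a single state $i_n^{\ast} \in S$ depending only on $(\tilde{a})$.

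Next I would compute the Radon-Nikodym derivative $\tilde{v}'_{\tilde{a}} := d(\mu \circ T_f^{-n})/d\mu$. For a measurable rectangle $B \times \{j\} \subset X \times S$,
\[
\mu\bigl(\tilde{a} \cap T_f^{-n}(B \times \{j\})\bigr) = \frac{1}{\#S}\, m\bigl(a \cap T^{-n}B\bigr)\, \mathbf{1}[j = i_n^{\ast}] = \frac{1}{\#S}\int_{T^n a \cap B} v'_a\, dm \cdot \mathbf{1}[j = i_n^{\ast}],
\]
where $a := [a_0, \ldots, a_{n-1}]$ and $v'_a$ is the base Radon-Nikodym derivative from Definition \ref{def:StrongDistortion}. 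On the other hand, expressing the left side as $\int_{T_f^n\tilde{a} \cap (B \times \{j\})} \tilde{v}'_{\tilde{a}}\, d\mu$, I would read off that $\tilde{v}'_{\tilde{a}}(x, j) = v'_a(x)\,\mathbf{1}[j = i_n^{\ast}]$ for a.e.\ $(x,j) \in T_f^n \tilde{a}$.

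Finally, for a.e.\ $(x, i_n^{\ast}), (y, i_n^{\ast}) \in T_f^n \tilde{a}$, the ratio $\tilde{v}'_{\tilde{a}}(x, i_n^{\ast})/\tilde{v}'_{\tilde{a}}(y, i_n^{\ast}) = v'_a(x)/v'_a(y) \leq D$ by the Strong Distortion Property of $T$, so $T_f$ inherits the Strong Distortion Property with the same constant $D$. The only slightly delicate point is the bookkeeping that guarantees the itinerary $(i_0, i_1, \ldots, i_n^{\ast})$ is constant on each admissible $n$-cylinder, but this is immediate from the constancy of the $f_i$ on $\beta$; everything else is a direct change-of-variables calculation on the product measure $\mu$.
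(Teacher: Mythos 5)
Your proof is correct, and it is precisely the ``routine'' argument that the paper itself omits (it defers the details to Proposition 4.3 of the author's thesis rather than proving the statement in the text). The two key observations --- that constancy of the $f_i$ on $\beta$ forces every admissible $n$-cylinder of $T_f$ to be a base $n$-cylinder times a single starting state, with image $T^n a \times \{i_n^{\ast}\}$, and that the counting-measure fibre therefore contributes nothing to the Radon--Nikodym derivative, so $\tilde{v}'_{\tilde{a}}(x, i_n^{\ast}) = v'_a(x)$ and the same constant $D$ works --- are exactly what is needed, and your change-of-variables bookkeeping on rectangles $B \times \{j\}$ (which suffices since $S$ is finite) closes the argument.
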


\begin{defn}
We say that a Markov transformation $T$ of probability space $(X, m)$, with Markov partition $\beta$, has \emph{Finite Images} if $\#\{Ta: a \in \beta\} < \infty$.
\end{defn}
The following result combines Lemma 4.4.1 and Theorem 4.6.3 in \cite{Aaronson1997}. (We note that \cite{Aaronson1997} uses the term \emph{topologically transitive} instead of \emph{irreducible}.)

\begin{pro}\label{pro:AaComb}
Let $T$ be an irreducible Markov transformation of a probability space $(X, m)$, with Markov partition $\beta$, satisfying Strong Distortion and Finite Images. Then there exists an ergodic, invariant, probability measure $m'$ equivalent to $m$.
\end{pro}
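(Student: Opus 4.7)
The plan is to follow the standard ``folklore theorem'' strategy: construct an absolutely continuous invariant density via the Perron-Frobenius operator, upgrade absolute continuity to equivalence using irreducibility, and then deduce ergodicity by a Hopf-style density argument.

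First, I would pass to the transfer operator $\hat{T}: L^1(m) \to L^1(m)$ defined by the duality $\int_X (\hat{T}g) \cdot f \, dm = \int_X g \cdot (f \circ T) \, dm$, so that a density $h$ produces a $T$-invariant absolutely continuous measure precisely when $\hat{T}h = h$. Using the Markov structure, for each $n \geq 1$ one has $\hat{T}^n \mathbf{1} = \sum_{a \in \beta_n} v'_a \cdot \mathbf{1}_{T^n a}$, with the sum over admissible $n$-cylinders. Since $T$ is Markov, $T^n([a_0, \ldots, a_{n-1}]) = T a_{n-1}$, so the images lie in the finite collection $\{Tb : b \in \beta\}$; Finite Images therefore controls the geometry of $T^n a$ uniformly in $n$, while Strong Distortion bounds the oscillation of each $v'_a$ on $T^n a$ by the factor $D$. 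Combining these, I would establish a uniform bound $\|\hat{T}^n \mathbf{1}\|_\infty \leq M$ for some $M$ independent of $n$.

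The Cesàro averages $h_N := \frac{1}{N}\sum_{n=0}^{N-1} \hat{T}^n \mathbf{1}$ then lie in a norm-bounded subset of $L^\infty(m) \cong L^1(m)^{\ast}$. Weak-$\ast$ compactness yields a subsequential limit $h \in L^\infty(m)$ satisfying $\hat{T}h = h$, $h \geq 0$, and $\int h \, dm = 1$; define $dm' := h \, dm$. To upgrade absolute continuity to equivalence, I would use that $\{h = 0\}$ is, up to $m$-null sets, a $T$-invariant union of elements of $\beta$, which by irreducibility must be null; a further application of Strong Distortion combined with the uniform $L^\infty$ bound then produces a uniform lower bound $h \geq \delta > 0$ almost everywhere.

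For ergodicity, suppose $A$ is $T$-invariant with $0 < m'(A) < 1$. By Lebesgue differentiation there is an admissible cylinder $b$ in which $A$ has $m$-density arbitrarily close to $1$; Strong Distortion transports this almost-full density through preimages of $T^n$ to every cylinder communicating with $b$, and irreducibility then propagates full density to all of $X$, contradicting $m'(A^c) > 0$. The principal obstacle is establishing the uniform $L^\infty$ bound on $\hat{T}^n \mathbf{1}$; the weak-$\ast$ compactness step, the equivalence upgrade, and the Hopf-style ergodicity argument all rest on it. Since this bound and its consequences are precisely what Aaronson establishes in Lemma 4.4.1 and Theorem 4.6.3, the proposition ultimately reduces to verifying that the standing hypotheses here match those of the cited results.
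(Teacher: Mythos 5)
The paper does not actually prove this proposition: it is imported wholesale, the stated ``proof'' being the citation of Lemma 4.4.1 and Theorem 4.6.3 of Aaronson's book, and your sketch is essentially a reconstruction of the standard folklore-theorem argument that underlies those two results. As such your outline is sound: the uniform bound $\|\hat{T}^n\mathbf{1}\|_\infty \leq D/\min\{m(Tb): b \in \beta,\ m(Tb)>0\}$ does follow exactly as you say, because $T^n[a_0,\ldots,a_{n-1}] = Ta_{n-1}$ and Strong Distortion gives $v'_a \leq D\, m(a)/m(T^na)$ on $T^na$, and the Ces\`aro/compactness step then produces an invariant density. Three points deserve more care than your sketch gives them. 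First, the compactness is better run through Dunford--Pettis (weak $L^1$ compactness of the uniformly bounded $h_N$) with test functions in $L^\infty$: for a merely nonsingular $T$, $f\circ T$ need not lie in $L^1$ when $f\in L^1$, so the weak-$\ast$ pairing you wrote does not directly show $\hat{T}h=h$; pairing against $f\in L^\infty$ does. Second, the assertions that $\{h=0\}$ is (mod $m$) a union of elements of $\beta$ and that $h \geq \delta > 0$ are not automatic from invariance of the support plus irreducibility; they require decomposing $h = \hat{T}^n h$ over inverse branches and using Strong Distortion to transfer positivity from one image set to another --- this is precisely the content of Aaronson's Lemma 4.4.1, so your final remark that the argument ``reduces to'' the cited results is doing real work there. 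Third, the Lebesgue differentiation step in the ergodicity argument should be phrased via the increasing martingale theorem along the refining cylinder partitions $\bigvee_{j<n}T^{-j}\beta$, which tacitly requires that the cylinders generate the $\sigma$-algebra mod $m$ --- an assumption built into Aaronson's notion of a Markov map but not spelled out in the paper's definition. With those repairs your route is the proof the paper is leaning on, rather than an alternative to it.
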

We may now state the main result of this section.

\begin{thm}\label{thm:AsympOccTimes}
Suppose that the state space $S$ is finite and that the transformation $T$ is a Markov map, with Markov partition $\beta$, that has Strong Distortion and Finite Images. Suppose also that $\tilde{\beta}$ is irreducible under $T_f$. Then there exists a distribution $(\pi_i)_{i \in S}$ such that for all $i, j \in S$
\begin{equation}\label{eq:ErgWalk}
\lim_{n \rightarrow \infty} \frac{1}{n}\sum_{r=0}^{n-1}\mathbf{1}_i(U_{j,r}(x)) = \pi_i \quad \mathrm{for}~m-\mathrm{a.e.}~ x.
\end{equation} 
\end{thm}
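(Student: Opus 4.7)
The plan is to apply Birkhoff's Ergodic Theorem to the skew-product $T_f$ on $(X \times S, \mu)$, after using Proposition \ref{pro:AaComb} to produce an invariant ergodic probability measure equivalent to $\mu$. By Proposition \ref{pro:SkewMarkov}, $T_f$ is Markov with Markov partition $\tilde{\beta}$, and by Proposition \ref{pro:T_fisMGF} it inherits the Strong Distortion Property. Before invoking Proposition \ref{pro:AaComb}, I need to verify that $T_f$ has Finite Images. This uses the standing hypothesis that each $f_i$ is constant on elements of $\beta$: for every $a \times \{i\} \in \tilde{\beta}$, one has $T_f(a \times \{i\}) = Ta \times \{f_i(a)\}$, so
\[
\#\{T_f(a \times \{i\}) : (a,i) \in \tilde{\beta}\} \leq \#\{Ta : a \in \beta\} \cdot \#S < \infty.
\]

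Since $\tilde{\beta}$ is irreducible under $T_f$ by assumption, Proposition \ref{pro:AaComb} now yields an ergodic, $T_f$-invariant probability measure $\mu'$ equivalent to $\mu$. I define $\pi_i := \mu'(X \times \{i\})$, which is a probability distribution on $S$ because $\mu'$ is a probability measure. Applying Birkhoff's Ergodic Theorem to $g_i := \mathbf{1}_{X \times \{i\}} \in L^1(\mu')$ produces a set $E_i \subset X \times S$ with $\mu'(E_i) = 1$ on which
\[
\lim_{n \to \infty} \frac{1}{n}\sum_{r=0}^{n-1}g_i(T_f^r(x,j)) = \int g_i \, d\mu' = \pi_i.
\]
Because $T_f^r(x,j) = (T^r x, U_{j,r}(x))$, the left-hand side is exactly the ergodic average appearing in (\ref{eq:ErgWalk}).

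It remains to upgrade the $\mu'$-a.e.\ statement on $X \times S$ to the fibrewise $m$-a.e.\ statement required. Since $\mu' \sim \mu = \frac{m}{\#S} \times \mathrm{counting}$, the set $E_i^c$ satisfies $\mu(E_i^c) = 0$, which forces $m\{x : (x,j) \in E_i^c\} = 0$ for every $j \in S$; intersecting the finitely many full-measure sets indexed by $(i,j) \in S \times S$ gives the claim uniformly over all starting states $j$. The only step that requires a small argument rather than a direct citation is the Finite Images verification for $T_f$; once that is in place, the theorem reduces to chaining the quoted propositions with Birkhoff's theorem.
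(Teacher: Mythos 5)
Your proposal is correct and follows essentially the same route as the paper: verify Finite Images and Strong Distortion for $T_f$, invoke Proposition \ref{pro:AaComb} to get an ergodic invariant measure $\mu'$ equivalent to $\mu$, set $\pi_i := \mu'(X \times \{i\})$, and apply Birkhoff's Ergodic Theorem to the skew-product. You merely spell out two steps the paper leaves implicit (the Finite Images count for $T_f$ and the passage from $\mu'$-a.e.\ on $X \times S$ to $m$-a.e.\ on $X$ uniformly in the starting state), and both are handled correctly.
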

\begin{proof}
Since $T$ has Finite Images and since $S$ is finite, it follows that $T_f$ also has Finite Images. 

By Proposition \ref{pro:T_fisMGF}, $T_f$ has the Strong Distortion Property with respect to $\mu$. 

By assumption, $\tilde{\beta}$ is irreducible under $T_f$, and it follows from Proposition \ref{pro:AaComb} that $T_f$ has an ergodic, invariant, probability measure $\mu'$ equivalent to $\mu$. Thus, if for each $i \in S$ we define $\pi_i := \mu'(X \times \{i\})$, it follows from Birkhoff's Ergodic Theorem that
\begin{equation}\label{eq:ErgSkew}
\lim_{n \rightarrow \infty} \frac{1}{n}\sum_{r=0}^{n-1}\mathbf{1}_{i}(U_{j,r}(x)) = \pi_i \quad \mathrm{for}~\mu'-\mathrm{a.e.}~ (x, j).
\end{equation}
Equation (\ref{eq:ErgWalk}) now follows from (\ref{eq:ErgSkew}).
\end{proof}

\paragraph{Acknowledgements}
I am greatly indebted to Ian Melbourne for suggesting \emph{deterministically driven random walks} as a topic of research, and for his advice during the development this work. I would also like to thank the EPSRC for funding this research.


\begin{thebibliography}{99}
\bibitem[1]{Aaronson1997} J. Aaronson. \emph{An Introduction to Infinite Ergodic Theory}. American Mathematical Society (1997).
\bibitem[2]{BBQ2008} W. Bahsoun, C. Bose and A. Quas. \emph{Deterministic Representation of Position Dependent Random Maps}. Disc. Cont. Dynam. Sys. A. \textbf{22} (2008) 529-540.
\bibitem[3]{Barnsley1985} M. F. Barnsley and S. G. Demko. \emph{Iterated Function Systems and the Global Construction of Fractals}. Proc. Royal. Soc. London. Ser A \textbf{399} (1985) 243-275.
\bibitem[4]{FurstKest1960} H. Furstenberg and H. Kesten. \emph{Products of Random Matrices}. Annals of Mathematical Statistics. \textbf{31} (1960) 457-469.
\bibitem[5]{Hutchinson1981} J. Hutchinson. \emph{Fractals and Self-Similarity}. Indiana Univ. J. Math. \textbf{30} (1981) 713-747.
\bibitem[6]{Kifer1986} Y. Kifer. \emph{The Ergodic Theory of Random Transformations}. Progress in Probability and Statistics (1986). 
\bibitem[7]{Little2012a} C. Little. \emph{Deterministically Driven Random Walks in a Random Environment}. Ph.D Thesis. Pending.
\bibitem[8]{Little2012b} C. Little. \emph{Deterministically Driven Random Walks in a Random Environment on} $\Z$. Preprint.
\end{thebibliography}
\end{document}